\documentclass[12pt]{article}
\usepackage[body={6.5in,9.0in},left=1in,top=1in,centering]{geometry}
\usepackage{amsmath,bm,graphicx,amssymb,mathrsfs,amsthm,pstricks}
\usepackage[colorlinks=true,urlcolor=blue,linkcolor=blue,citecolor=blue,pdfstartview=FitH]{hyperref}

\usepackage{authblk}
\usepackage{graphicx}
\usepackage{amssymb}
\usepackage{epstopdf}
\DeclareGraphicsRule{.tif}{png}{.png}{`convert #1 `dirname #1`/`basename #1 .tif`.png}

 \newcommand{\R}{\mathbb R}
\newcommand{\E}{\mathbb E}
\newcommand{\W}{\mathcal W}
 \newcommand{\cP}{\mathcal P}
\def\P{\mathbb P}

\newcommand{\tr}{\mathrm{tr}}

\newcommand{\mbf}{\mathbf}
\newcommand{\bmu}{\bm\mu} \newcommand{\bnu}{\bm\nu} 
  \usepackage[sort,comma]{natbib}
 \newcommand{\comment}[1]{}

 \allowdisplaybreaks
\numberwithin{equation}{section}
\newtheorem{theorem}{Theorem}[section]

\newtheorem{lemma}[theorem]{Lemma}
\newtheorem{assumption}[theorem]{Assumption}
\newtheorem{corollary}[theorem]{Corollary}
\theoremstyle{definition}
\newtheorem{example}{Example}[section]
\newtheorem{definition}[theorem]{Definition}
\newtheorem{rem}[theorem]{Remark}

\parskip=4pt

\title{On Lyapunov Conditions for the Well-Posedness of McKean-Vlasov Stochastic Differential Delay Equations}
\author[1]{Dan Noelck}
\affil[1]{Department of Applied Mathematics, Illinois Institute of Technology. {\tt dnoelck@iit.edu},
}
\date{\today}

\begin{document}

\maketitle
\begin{abstract}
This work focuses on the well-posedness of McKean-Vlasov stochastic differential delay equations.  Under suitable Lipschitz conditions on the drift and diffusion terms, along with a distribution dependent Lyapunov condition, this paper shows the existence of a unique solution to the distribution dependent stochastic differential delay equation.

 \bigskip
{\bf Keywords.} McKean-Vlasov  stochastic differential equation, Stochastic delay equation, Well-Posedness, Lyapunov Condition.

\bigskip
{\bf Mathematics Subject Classification.}  60J25, 60H10, 60J60.
\end{abstract}

\section{Introduction}

In this paper we consider the distribution dependent stochastic differential delay equation (SDDE)
\begin{equation}\label{MV-Delay-SDE}
    dX(t)=b(X(t),X(t-\tau),\bm{\mu}_t)dt+\sigma(X(t),X(t-\tau))dW(t)
\end{equation}
  where $b:\R^d \times \R^d \times \mathcal{P}(\R^d \times \R^d) \to \R^d$, $\sigma: \R^d \times \R^d  \to \R^{d \times d}, $ $\tau>0$,  $\bm{\mu}_t=\mathcal{L}(X(t),X(t-\tau))\in \mathcal{P}(\R^d \times \R^d)$ is the joint distribution of $X(t)$ and $X(t-\tau)$, and $W(t)$ is a $d$-dimensional Brownian motion on a complete probability space with natural filtration $(\Omega, \mathcal{F}, \P,\{\mathcal{F}_t\}_{t\geq 0})$.  We define for simplicity of notation $\mathbf{X}(t):=(X(t-\tau),X(t))$. Assume $X(t)=\xi(t)$ for $t\in[-\tau,0]$ where $ \xi \in C([-\tau,0];  \R^d)$ is an $\mathcal{F}_0$-measurable random variable.  We will show that under sufficient conditions on the coefficients $b$ and $\sigma$, \eqref{MV-Delay-SDE} admits a unique solution.

  Equation \eqref{MV-Delay-SDE}  is an
SDE with delay whose drift and diffusion coefficients
depend not only on the state process with delay, but also the joint distribution of the process with delay.  Without delay, this type of equation naturally arises as the limit of system of interacting ``particles.''  If the particles have symmetric dynamics, and the positions depend on the positions of the mean fields of the other particles, then as the number of particles tends to infinity, the limit dynamic of each particle does not depend on the positions of the others anymore but only on their statistical distributions by the law of large numbers. The resulting limit system is a McKean-Vlasov process described by the equation 
\[dX(t)=b(X(t),\mu_t)dt+\sigma(X(t))dW(t).\]

The study of this type of SDE was
initiated in \cite{McKean-66,McKean-67} and further developed in \cite{Funaki-84,Sznitman-91}.
Due to the introduction of mean-field games independently by \cite{HuangMC-06} and \cite{LasryL-07}, McKean-Vlasov SDEs have received growing attention, with more the recent developments found in \cite{DingQ-21,Chau-20,Wang-18} and the references therein.

We motivate the additional delay term with the following example:
\[dX(t)=b(X(t),\mu_t)dt+u(X(t-\tau),\mu_{t-\tau})+\sigma(X(t))dW(t).\]
The term $u$ can be thought of as a delayed control, of which there are many applications in science and engineering, see for example \cite{CarmoD-18I,CarmoD-18II}.  We refer to \cite{HairMS-11} and the references therein for a review of stochastic differential delay equations.  There is little work on distribution dependent SDDEs.  Some work has been done recently by \cite{WuXZ-23} on the functional case, and previously the case of discrete time feedback has been studied by \cite{WuHJG-22}. 

Another model of particular interest is the so called stochastic opinion dynamic model.  Such models attempt to measure peoples opinions of certain social scenarios.  Stochastic opinion dynamic models are often used to study marketing or other types of information campaigns, see for example \cite{Helbing-Dirk-10,Friedkin-Jojnsen-90}.  McKean-Vlasov SDEs turn out to be excellent models for such situations as each individuals opinion is influenced by the opinions of those they interact with.  Furthermore, information typically takes some amount of time to travel between people, so models including time delay for this information are also of interest, see for example \cite{Zhou-Zhao-Yuan-21}.

The purpose of this paper is to find appropriate Lyapunov and Lipschitz conditions to prove the well-posedness of \eqref{MV-Delay-SDE}.  The results in this paper expand upon the existence and uniqueness results for McKean-Vlasov SDEs found in \cite{Wang-23} in two ways.  First we introduce a measure dependence to the Lyapunov condition, and second we introduce the delay term $X(t-\tau)$ to both the drift and diffusion terms.  This drift term is accounted for in both our Lyapunov and Lipschitz conditions.  The results will then be expanded to show that in fact the problem with multiple non-constant deterministic delay terms is also well posed.

\subsection{Definitions}

Throughout this paper, we define $|\cdot|$ to be the Euclidean norm on $\R^d$, $\|\cdot\|$ to be the $L^1$ norm on $\mathbf{x}=(x_{-1},x_0)\in \R^d \times \R^d$ 
given by the average $\|\mathbf{x}\|:=\frac{1}{2}(|x_{-1}|+|x_0|)$ and we denote by $\|\cdot\|_F$ the Frobenius matrix norm.
For any   $\mathbf{x}=(x_{-1},x_0)\in \R^d \times \R^d$
and ${V}\in C^2(\R^{d};\R^{+})$, we write   $\mathbf{V}(\mathbf{x}):=V(x_0)+V(x_{-1})$.
Further define $\mathcal{P}_V (\R^d \times \R^d)$ to be the set of probability measures $\bm{\mu}$ such that $\int \mathbf{V}(\mathbf{x}) d\bm{\mu} < \infty$.
  We  next define, as in \cite{Wang-23} the special case of the Wasserstein distance by:
\[\mathcal{W}_{\psi, V}(\bm{\mu},\bm{\nu})=\inf_{\pi \in \Pi(\bm{\mu},\bm{\nu})}\int_{\R^{4d}} \psi(||\mathbf{x}-\mathbf{y}||)(1+ \mathbf{V}(\mathbf{x})+ \mathbf{V}(\mathbf{y}) )\pi(d\mathbf{x},d\mathbf{y}), \ \psi \in \Psi\]
where $\psi \in C^2(\R^+ ; \R^+)$ is an increasing function with $\psi(0)=0$ and $\psi''\geq 0$.
  Notice \eqref{MV-Delay-SDE} has generator 
 \[LV(\mathbf{x},\bm{\mu})=\partial_{x_0} V(x_0) b(\mathbf{x},\bm{\mu})+\frac{1}{2}\tr (({\sigma}(\mathbf{x},\bm{\mu})^\top \partial_{x_0x_0}V(x_0){\sigma}(\mathbf{x},\bm{\mu}))).\]
  Now we define $\bm{\mu} (\mathbf{V}) =\int_{\R^d \times \R^d}[V(x_0)+V(x_{-1}) ]\bm{\mu}(dx_0,dx_{-1})$.  Last we define well-posedness by:

    \begin{definition}
        A continuous adapted process $(X(t))_{t\geq 0}$ is called a strong solution to \eqref{MV-Delay-SDE} if
        \[\int_0^t \E[|b(\mbf{X}(s),\bm\mu(s)|+\|\sigma(\mbf{X}(s)\|_F^2]ds < \infty, \ \ t>0,\]
        and
        \[X(t)=X(0)+\int_0^t b(\mbf X(s),\bm \mu(s)) ds + \int_0^t \sigma(\mbf X(s))dW(s), \ \ t>0 \ \text{a.s.}\]
          Further, \eqref{MV-Delay-SDE} admits a unique strong solution in $\mathcal{P}_f$ if for any $\mathcal{F}_0$-measurable random variable $X(0)$, with $\E[f(X(0))] < \infty$, then \eqref{MV-Delay-SDE} has a unique solution $(X(t))_{t\geq 0}$ with $\E[f(X(t))] < \infty$.
    \end{definition}

\section{Well-Posedness}\label{Section Lyapunov Contraction}

We will assume the following throughout:
\begin{assumption}[Lyapunov Condition]\label{Lyapunov-assump-2}
    There exist constants $c_1$ and $c_2$ and a function $V\in C^2(\R^{d}:\R^+)$ such that $c_1|x|^p\leq V(x) \leq c_2|x|^p$ for some $p\geq 1$, and  $|\sigma(\mbf{x})\nabla {V}({x})|\leq \zeta(1+{V}({x}))$  with
\begin{equation}\label{Lyap-eq 2}
LV(\mathbf{x},\bm{\mu})\leq \zeta(1+\bm{\mu}(\mathbf{V})+\mathbf{V}(\mathbf{x})).
\end{equation}
\end{assumption}

\begin{assumption}[Generalized Lipshitz condition]\label{lip-assump}
    There exist constants  $K,\theta>0$, such that
    \begin{equation}\label{lip eq 1}
    \langle x_0-y_0, b(\mathbf{x},\bm{\mu})-b(\mathbf{y},\bm{\nu}) \rangle^+ + \frac{1}{2}||\sigma({\mbf x})-\sigma(\mbf y)||_F^2 \leq |x_{0}-y_{0}|(K\| \mathbf{x}-\mathbf{y} \|+\theta \W_{\psi,V}(\bm{\mu},\bm{\nu})),
    \end{equation} for all $\mbf x, \mbf y \in \R^{d}$ and $\bmu, \bnu \in \cP(\R^{d}\times \R^{d})$.
\end{assumption}

\begin{rem}
    We notice that the inclusion of the measure term $\bm \mu$ in assumption \ref{Lyapunov-assump-2}, generalizes the types of Lyapunov conditions often found in the literature which do not include any measure dependence, see for example \cite[Assumption $H_1$]{Wang-23}.  Further both assumptions \ref{Lyapunov-assump-2} and \ref{lip-assump} take into account the \textit{history} of the process.
\end{rem}

\subsection{Preliminary Results}

Now, for any $T>0$ and $\bm{\mu}\in C([0,T],\mathcal{P}(\R^d\times \R^d))$, consider the SDE
\begin{equation}\label{well-pose sde 1}
    dX^{\bm{\mu}}(t)=b(\mathbf{X}^{\bm{\mu}}(t),\bm{\mu}_t)dt+\sigma(\mathbf{X}^{\bm{\mu}}(t))dW(t), 
\end{equation}
with initial condition $X(t) = \xi(t) $ for all $t\in [-\tau, 0]$, 
where $\xi\in C([-\tau,0];\R^d) $ is  $\mathcal{F}_0$ measurable and satisfies 
   $\E[\|\xi \|_V]:= \E\big[\sup_{u\in [-\tau,0]} V(\xi 
   (u))\big] < \infty$.

\begin{lemma}\label{unique solution lem}
    Under assumptions  \ref{lip-assump} and \ref{Lyapunov-assump-2},    equation \eqref{well-pose sde 1} admits a unique solution.  Moreover the solution has the property
    \begin{equation}\label{unique solution eq 1}
        \E\bigg[\sup_{-\tau \leq s \leq T}V(X(t))<\infty\bigg]
    \end{equation}
\end{lemma}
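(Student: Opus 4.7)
The plan is to exploit the fact that, with the measure flow $\bmu=(\bmu_t)_{t\in[0,T]}$ frozen, equation \eqref{well-pose sde 1} is a classical (non--McKean--Vlasov) stochastic differential delay equation, and to attack it by the standard step-by-step construction on successive intervals of length $\tau$. On $[0,\tau]$ the delayed argument $X(t-\tau)=\xi(t-\tau)$ is a prescribed, $\F_0$-measurable random function, so \eqref{well-pose sde 1} reduces to the random-coefficient SDE
\[dX(t)=\tilde b(t,X(t))\,dt+\tilde\sigma(t,X(t))\,dW(t),\qquad X(0)=\xi(0),\]
with $\tilde b(t,x):=b((\xi(t-\tau),x),\bmu_t)$ and $\tilde\sigma(t,x):=\sigma((\xi(t-\tau),x))$. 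Setting $\bmu=\bnu$ and $x_{-1}=y_{-1}=\xi(t-\tau)$ in Assumption \ref{lip-assump} collapses $\|\mathbf{x}-\mathbf{y}\|$ to $\tfrac12|x_0-y_0|$ and $\W_{\psi,V}(\bmu,\bnu)$ to $0$, yielding the one-sided monotonicity and diffusion-Lipschitz bound
\[\bigl\langle x_0-y_0,\tilde b(t,x_0)-\tilde b(t,y_0)\bigr\rangle^+ +\tfrac12\|\tilde\sigma(t,x_0)-\tilde\sigma(t,y_0)\|_F^2\leq \tfrac{K}{2}|x_0-y_0|^2,\]
which is precisely the classical hypothesis under which existence and uniqueness of strong solutions holds. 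Once the solution is obtained on $[0,\tau]$, its restriction supplies the delayed data on $[\tau,2\tau]$, and induction over the intervals $[n\tau,(n+1)\tau]$ produces a unique strong solution on all of $[0,T]$.

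Next I would establish \eqref{unique solution eq 1}. Applying It\^o's formula to $V(X(t))$ and using \eqref{Lyap-eq 2}, after localising by a reducing sequence $\tau_n\uparrow\infty$ and taking expectations I obtain
\[\E[V(X(t\wedge\tau_n))]\leq \E[V(\xi(0))]+\zeta\int_0^t\bigl(1+\bmu_s(\mathbf{V})+\E[V(X(s\wedge\tau_n))]+\E[V(X((s-\tau)\wedge\tau_n))]\bigr)\,ds.\]
On $[0,\tau]$ the delayed expectation is controlled by $\E[\|\xi\|_V]<\infty$, and the forcing $s\mapsto\bmu_s(\mathbf{V})$ is locally bounded by the implicit continuity hypothesis $\bmu\in C([0,T];\cP_V(\R^d\times\R^d))$; Gronwall's inequality and Fatou then bound $\E[V(X(t))]$ on $[0,\tau]$. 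Iterating across each sub-interval $[n\tau,(n+1)\tau]$ propagates the bound to all of $[0,T]$. To upgrade this to the supremum bound \eqref{unique solution eq 1}, I would invoke the Burkholder--Davis--Gundy inequality on the local martingale part of $V(X(\cdot))$; the assumption $|\sigma(\mbf x)\nabla V(x)|\leq \zeta(1+V(x))$ controls its quadratic variation, and Young's inequality allows the supremum to be absorbed into the left-hand side, leaving a scalar Gronwall estimate for $\E[\sup_{s\leq t}V(X(s))]$.

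The main technical obstacle is the interplay between the measure-dependence in the Lyapunov bound and the delayed Lyapunov term $V(X(s-\tau))$: neither quantity is controlled by $V(X(s))$ itself, so a naive single-step Gronwall argument on $[0,T]$ does not close. The step-by-step induction is precisely what neutralises this, because on each sub-interval $[n\tau,(n+1)\tau]$ both $\bmu_s(\mathbf{V})$ and $\E[V(X(s-\tau))]$ become known forcing data inherited from the previous step, reducing the estimate to a scalar integral inequality in $\E[V(X(s))]$ alone. A secondary subtlety is that the effective drift $\tilde b(t,\cdot)$ is only progressively measurable in $(t,\omega)$ through $\xi(t-\tau)$ and $\bmu_t$, so the monotone-coefficient SDE existence theorem to be cited must accommodate measurable-in-time random coefficients.
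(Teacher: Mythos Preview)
Your proposal is correct and follows essentially the same step-by-step construction on intervals of length $\tau$ that the paper uses, reducing \eqref{well-pose sde 1} to a classical SDE on each subinterval and iterating. The paper's own proof is considerably terser---it simply observes that on $[0,\tau]$ the delayed argument is the given initial segment $\xi$, cites \cite{MeynT-93III} for existence, uniqueness, and the moment bound, and then iterates---so your explicit derivation of the one-sided Lipschitz condition from Assumption~\ref{lip-assump} and the BDG argument for the supremum bound fill in details the paper leaves to the reference.
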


\begin{proof}
    We notice that on the interval $t\in[0,\tau]$, we have
    \[dX^{\bm{\mu}}(t)=b(X^{\bm{\mu}}(t),\xi(t-\tau),\bm{\mu}_t)dt+\sigma(X^{\bm{\mu}}(t),\xi(t-\tau))dW(t),\]
    which is a classical SDE and the existence of a unique solution is well known with the property
    \[\E\bigg[\sup_{-\tau \leq s \leq \tau}V(X(t))<\infty\bigg].\]  See \cite{MeynT-93III}, for example. Once a solution on $[0,\tau]$ is known, we can write another classical SDE on $[\tau,2\tau]$ to obtain the unique solution on that interval.  Repeat this process for $[2\tau,3\tau]$, $[3\tau,4\tau]$ etc. until a solution is obtained on the entire interval $[0,T]$.
\end{proof}

Now for any $T>0$, let $\mu,\nu \in C([0,T];\mathcal{P}_V(\R^d \times \R^d)$, and consider two solutions to \eqref{well-pose sde 1} $X^{\bm{\mu}}(t)$ and $X^{\bm{\nu}}(t)$.  Define $Z^{\bm{\mu},\bm{\nu}}(t):=X^{\bm{\mu}}(t)-X^{\bm{\nu}}(t)$, $\bm{\beta}(t):=b(\mathbf{X}^{\bm{\mu}}(t),\bm{\mu}_t) - b(\mathbf{X}^{\bm{\nu}} (t),\bm{\nu}_t)$, and $\bm{\bm{\Delta}}(t):=\sigma(\mathbf{X}^{\bm{\mu}}(t))-\sigma(\mathbf{X}^{\bm{\nu}} (t))$.

\begin{lemma}\label{Z(t) lemma 2}
Given Assumption \ref{lip-assump}
    \begin{align}\label{Zt Process}
        |Z^{\bm{\mu},\bm{\nu}}(t)|\leq&\ |Z^{\bm{\mu},\bm{\nu}}(0)|+\int_0^t (K\|\mathbf{Z}^{\bm{\mu},\bm{\nu}}(u)\|+\theta\W_{\psi,V}(\bm{\mu}_u,\bm{\nu}_u))du \\
       \nonumber &+\int_0^t I_{\{|Z^{\bm{\mu},\bm{\nu}}(u)|\neq 0\}} \frac{1}{|Z^{\bm{\mu},\bm{\nu}}(u)|}\langle Z^{\bm{\mu},\bm{\nu}}(u), \bm{\bm{\Delta}}(u)dW(u) \rangle.
    \end{align}
\end{lemma}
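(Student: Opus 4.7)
The plan is to apply It\^o's formula to a smooth regularization of $z\mapsto |z|$ and then pass to the limit, using the generalized Lipschitz bound of Assumption \ref{lip-assump} to identify the drift terms.

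First, from \eqref{well-pose sde 1} the difference process satisfies
\[
dZ^{\bm\mu,\bm\nu}(t)=\bm\beta(t)\,dt+\bm\Delta(t)\,dW(t),
\]
so $Z^{\bm\mu,\bm\nu}$ is a continuous It\^o process. Since $|\cdot|$ is not $C^2$ at the origin, I introduce for each $\e>0$ the smooth approximation $\phi_\e(z):=\sqrt{|z|^2+\e}$, which satisfies $\nabla\phi_\e(z)=z/\phi_\e(z)$ and
\[
\nabla^2\phi_\e(z)=\frac{1}{\phi_\e(z)}\,I_d-\frac{zz^\top}{\phi_\e(z)^3}.
\]
It\^o's formula applied to $\phi_\e(Z^{\bm\mu,\bm\nu}(t))$ then yields, pathwise,
\[
\phi_\e(Z^{\bm\mu,\bm\nu}(t))=\phi_\e(Z^{\bm\mu,\bm\nu}(0))+\int_0^t\frac{\langle Z^{\bm\mu,\bm\nu}(u),\bm\beta(u)\rangle}{\phi_\e(Z^{\bm\mu,\bm\nu}(u))}\,du+\frac{1}{2}\int_0^t H_\e(u)\,du+\int_0^t\frac{\langle Z^{\bm\mu,\bm\nu}(u),\bm\Delta(u)\,dW(u)\rangle}{\phi_\e(Z^{\bm\mu,\bm\nu}(u))},
\]
where $H_\e(u):=\phi_\e(Z)^{-1}\|\bm\Delta(u)\|_F^2-\phi_\e(Z)^{-3}|\bm\Delta(u)^\top Z^{\bm\mu,\bm\nu}(u)|^2\ge 0$ and in particular $H_\e(u)\le \|\bm\Delta(u)\|_F^2/\phi_\e(Z)$.

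Next I combine the finite-variation terms. Replacing $\langle Z,\bm\beta\rangle$ by $\langle Z,\bm\beta\rangle^+$ in the drift and bounding $H_\e$ as above, the Lipschitz condition \eqref{lip eq 1} gives
\[
\frac{\langle Z,\bm\beta(u)\rangle^+}{\phi_\e(Z)}+\frac{1}{2}\frac{\|\bm\Delta(u)\|_F^2}{\phi_\e(Z)}\le \frac{|Z^{\bm\mu,\bm\nu}(u)|}{\phi_\e(Z^{\bm\mu,\bm\nu}(u))}\bigl(K\|\mathbf Z^{\bm\mu,\bm\nu}(u)\|+\theta\W_{\psi,V}(\bm\mu_u,\bm\nu_u)\bigr),
\]
and the factor $|Z|/\phi_\e(Z)$ is bounded above by $1$.

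Finally I let $\e\downarrow 0$. Pointwise, $\phi_\e(Z)\to|Z|$, and $|Z|/\phi_\e(Z)\to\mathbf{1}_{\{|Z|\ne 0\}}\le 1$, so by dominated convergence the drift integral converges to $\int_0^t(K\|\mathbf Z\|+\theta\W_{\psi,V})du$ (the indicator can be dropped since the integrand vanishes on $\{Z=0\}$ already). For the stochastic integral, the integrand $\langle Z,\bm\Delta\rangle/\phi_\e(Z)$ is dominated by $\|\bm\Delta\|_F$, which is locally square-integrable by the growth implied by Assumptions \ref{Lyapunov-assump-2}--\ref{lip-assump} and Lemma \ref{unique solution lem}, so the dominated-convergence theorem for stochastic integrals gives convergence in probability to $\int_0^t\mathbf{1}_{\{|Z|\ne 0\}}|Z|^{-1}\langle Z,\bm\Delta\,dW\rangle$. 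Taking a subsequential a.s.\ limit on both sides produces the stated inequality \eqref{Zt Process}.

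The only non-routine step is the passage to the limit in the stochastic integral; the necessary $L^2$-domination comes from the standing moment bound on $\sigma$ implied by Assumption \ref{Lyapunov-assump-2} together with the fact that the integrand is uniformly bounded in $\e$ by $\|\bm\Delta\|_F$.
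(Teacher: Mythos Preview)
Your argument is correct and reaches the same conclusion, but by a different regularization than the paper. The paper follows the classical Yamada--Watanabe route: it builds functions $g_n(r)=\int_0^{|r|}\int_0^y\rho_n(u)\,du\,dy$ with $\rho_n$ supported in $(a_n,a_{n-1})$ and $0\le\rho_n(r)\le 2/(nr)$, applies It\^o's formula to $g_n(|Z|)$, and splits the drift into a piece $I_1$ controlled by Assumption~\ref{lip-assump} and a second-order remainder $I_2$ that must be shown to vanish via the bound $g_n''(r)\le 2/(nr)$. You instead take $\phi_\e(z)=\sqrt{|z|^2+\e}$; because $H_\e\le\|\bm\Delta\|_F^2/\phi_\e$, the entire second-order contribution is absorbed with the first-order drift in one application of \eqref{lip eq 1}, giving a bound on the finite-variation part that is already uniform in $\e$. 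Thus only the stochastic integral requires a limit, and the passage is handled exactly as in the paper (domination by $\|\bm\Delta\|_F$, convergence in probability, a.s.\ along a subsequence). Your approach is a bit more economical here precisely because Assumption~\ref{lip-assump} packages $\langle Z,\bm\beta\rangle^+$ and $\tfrac12\|\bm\Delta\|_F^2$ together; the Yamada--Watanabe construction in the paper is the device one would need if only a H\"older-type modulus on $\sigma$ were available rather than the combined bound \eqref{lip eq 1}.
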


\begin{proof}
    We first construct a function like that in \cite[proposition 5.2.13]{Karatzas-S}
    First notice that there exists a strictly decreasing sequence $\{a_n\} \subset (0,1]$ with $a_0=1$, $\lim_{n\to \infty}a_n=0$ and $\int_{a_n}^{a_{n-1}} \frac{1}{x}dx =n$ for every $n\geq 1$.  Then there exists a continuous function $\rho_n$ on $\R$ with support in $(a_n,a_{n-1})$ such that
 \begin{equation}
\label{e0gn(r)}
   0\leq \rho_n(x) \leq 2n^{-1}x^{-1}, \  \quad \text{and} \ \quad \int_{a_n}^{a_{n-1}}\rho_n(r)dr=1, \ \forall x > 0 \text{ and }n\in\mathbb{N}.
\end{equation}
    Consider the sequence of function
    \begin{equation}\label{gn(r)}
        g_n(r):=\int_{0}^{|r|}\int_0^y\rho_n(u)dudy, \quad r\in \R, \ n\geq1
    \end{equation}
    Notice $g_n$ is an even, twice differentiable function with $|g'_n(r)|\leq 1$, $\lim_{n\to \infty}g_n(r)=|r|$ for $r\in \R$ and $\lim_{n\to \infty}g'_n(r)=1$ for $r>0$.  We further notice $\{g_n(r)\}$ is non-decreasing.  Last we notice that $g_n,g'_n,$ and $g''_n$ vanish on the interval $(-a_n,a_n).$  Now direct computations yield 
    \[\nabla g_n (|x|)=g_{n}'(|x|)\frac{x}{|x|}, \quad \text{and} \quad \nabla^2g_n(|x|)=g''_n(|x|)\frac{xx^\top}{|x|^2}+g_{n}'(|x|)\bigg(\frac{I}{|x|}-\frac{xx^\top}{|x|^3}\bigg).\]
    Now since $dZ^{\bm{\mu},\bm{\nu}}(t)=\bm\beta(t)dt+\bm \Delta(t) dW(t)$ by It\^o's formula we have
    \begin{equation}\label{gn estimate}
    g_n(|Z^{\bm{\mu},\bm{\nu}}(t)|)=g_n(|Z^{\bm{\mu},\bm{\nu}}(0)|)+I_1+I_2+I_3, 
    \end{equation}
    where
    \begin{align*}
        I_1&:=\int_0^tI_{\{Z^{\bm{\mu},\bm{\nu}}(u)\neq 0\}}\frac{g'_n(|Z^{\bm{\mu},\bm{\nu}}(u)|)}{|Z^{\bm{\mu},\bm{\nu}}(u)|)}\bigg(\langle Z^{\bm{\mu},\bm{\nu}}(u), \bm\beta(t) \rangle + \frac{1}{2}\|\bm{\bm{\Delta}}(t)\|_F^2\bigg)du, \\    
        I_2&:= \int_{0}^tI_{\{Z^{\bm{\mu},\bm{\nu}}(u)\neq 0\}}\bigg(g''_n(|Z^{\bm{\mu},\bm{\nu}}(u)|)-\frac{g_{n}'(|Z^{\bm{\mu},\bm{\nu}}(u)|)}{|Z^{\bm{\mu},\bm{\nu}}(u)|}\bigg) \frac{|\bm{\bm{\Delta}}^\top (u)Z^{\bm{\mu},\bm{\nu}}(u)|^2}{|Z^{\bm{\mu},\bm{\nu}}(u)|^2} du, \\   
         I_3&:=\int_0^tI_{\{Z^{\bm{\mu},\bm{\nu}}(u)\neq 0\}}\frac{g'_{n}(|Z^{\bm{\mu},\bm{\nu}}(s)|)}{|Z^{\bm{\mu},\bm{\nu}}(u)|}\langle Z^{\bm{\mu},\bm{\nu}}(u),\bm{\bm{\Delta}}(u)dW(u) \rangle.
    \end{align*}
    Now by \eqref{lip eq 1} and the fact that $|g'_n(r)|\leq1$ we have
    \begin{align}\label{Zmu_lem_I1}
      \nonumber  I_1 \leq & \int_0^t I_{\{Z^{\bm{\mu},\bm{\nu}}(u)\neq 0\}}\frac{g'_n(|Z^{\bm{\mu},\bm{\nu}}(u)|)}{|Z^{\bm{\mu},\bm{\nu}}(s)|} |Z^{\bm{\mu},\bm{\nu}}(s)|(K \|\mathbf{Z}^{\bm{\mu},\bm{\nu}}(u)\|+\theta \W_{\psi,V}(\bm{\mu}_u,\bm{\nu}_u))du \\
        \leq & \int_0^t K(\|\mathbf{Z}^{\bm{\mu},\bm{\nu}}(u)\|+\theta \W_{\psi,V}(\bm{\mu}_u,\bm{\nu}_u))du.
    \end{align}
    Notice that direct computation yields
    \[g_n''(r)=\rho_n(r)\leq \frac{2}{nr}I_{(a_n,a_{n-1})}(r)\]
    and so by \eqref{lip eq 1} we have
    \begin{align}\label{Zmu_lem_I2}
    \nonumber    I_2 & \leq  \int_0^t I_{\{Z^{\bm{\mu},\bm{\nu}}(u) \neq 0\}}g''_n(|Z^{\bm{\mu},\bm{\nu}}(u)|) \frac{|\bm{\bm{\Delta}}^\top (u)Z^{\bm{\mu},\bm{\nu}}(u)|^2}{|Z^{\bm{\mu},\bm{\nu}}(u)|^2} du \\
     \nonumber  & \leq   \int_0^t I_{\{Z^{\bm{\mu},\bm{\nu}}(u)}\neq 0\} \frac{2I_{\{|Z^{\bm{\mu},\bm{\nu}}(u)| \in (a_n,a_{n-1})\}}}{n|Z^{\bm{\mu},\bm{\nu}}(u) |} \|\bm{\bm{\Delta}}(u)\|^2_F du\\
 \nonumber     &  \leq   \int_0^t I_{\{Z^{\bm{\mu},\bm{\nu}}(u)\in(a_n,a_{n-1})\}}  \frac{2K \|\mathbf{Z}^{\bm{\mu},\bm{\nu}}(u)\|}{n}  du \\&  \leq    \frac{2Kt \sup_{u\in[0,t]}\|\mathbf{Z}^{\bm{\mu},\bm{\nu}}(u)\|}{n} \to 0 \quad \text{as} \quad n \to \infty.
    \end{align}
    Last we notice that by \eqref{lip eq 1} we have
    \begin{align*}
        \int_0^t I_{\{Z^{\bm{\mu},\bm{\nu}}(u)\neq 0\}} \frac{g'_n(|Z^{\bm{\mu},\bm{\nu}}(u)|)^2}{|Z^{\bm{\mu},\bm{\nu}}(u)|^2}|\bm{\bm{\Delta}}(u)^\top Z^{\bm{\mu},\bm{\nu}}(u)|^2du  \leq \int_0^t \|\bm \Delta(u)\|^2du \leq \int_0^t K |Z^{\bm{\mu},\bm{\nu}}(u)|\|\mbf Z^{\bm{\mu},\bm{\nu}}(u)\|du. 
    \end{align*}
  We notice that
  \[  \E[|Z^{\bm{\mu},\bm{\nu}}(u)|\|\mbf Z^{\bm{\mu},\bm{\nu}}(u)\|]\leq \E\bigg[\sup_{-\tau \leq u \leq T}|X(u)|^2\bigg]\leq \E\bigg[\sup_{-\tau \leq u \leq T}V(X(t))\bigg]<\infty\]
  by equation \eqref{unique solution eq 1}.  Thus by the dominated convergence theorem,  we have
    \begin{align*}
        \E\bigg[\int_0^t I_{\{Z^{\bm{\mu},\bm{\nu}}(u)\neq 0\}}\frac{g'_n(|Z^{\bm{\mu},\bm{\nu}}(u)|)^2}{|Z^{\bm{\mu},\bm{\nu}}(u)|^2}|\bm{\bm{\Delta}}(u)^\top Z^{\bm{\mu},\bm{\nu}}(u)|^2du\bigg] \\ \to \E\bigg[\int_0^t I_{\{Z^{\bm{\mu},\bm{\nu}}(u) \neq 0\}}\frac{|\bm{\bm{\Delta}}(u)^\top Z^{\bm{\mu},\bm{\nu}}(u)|^2}{|Z^{\bm{\mu},\bm{\nu}}(u)|^2}du\bigg].  
    \end{align*}
    as $n \to \infty$.  Therefore upon a subsequence $\{n_k\}$
    \begin{equation}\label{Zmu_lem_I3}
    I_3\to \int_0^t I_{\{Z^{\bm{\mu},\bm{\nu}}(u)\neq 0\}}\frac{1}{|Z^{\bm{\mu},\bm{\nu}}(u)|}\langle Z^{\bm{\mu},\bm{\nu}}(u), \bm{\bm{\Delta}}(u)dW(u)\rangle \ \text{ a.s.}
    \end{equation}
    as $n_k\to \infty$.  Therefore  by plugging \eqref{Zmu_lem_I1}, \eqref{Zmu_lem_I2}, and \eqref{Zmu_lem_I3} into \eqref{gn estimate} and recalling that $\lim_{n\to \infty} g_n(r)=|r|$, we obtain \eqref{Zt Process}.  
\end{proof}

\subsection{Main Result}

We will now use a fixed point argument to show that equation \eqref{MV-Delay-SDE} has a unique solution.

\begin{theorem}\label{unique solution}
    Suppose Assumption \ref{Lyapunov-assump-2} holds and that \eqref{lip eq 1} holds, then equation \eqref{MV-Delay-SDE} admits a unique strong solution with
    \begin{equation}\label{EV(X) bound}
        \E[V(X(t))]\leq \bigg(\E[V(X(0))]+  \zeta T + 2\zeta \E[\|\xi\|_V]\bigg)e^{4\zeta T}<\infty.       
    \end{equation}

\end{theorem}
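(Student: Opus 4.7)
The plan is a fixed-point construction on measure flows. For any $\bm{\mu} \in C([0,T]; \mathcal{P}_V(\R^d \times \R^d))$, Lemma \ref{unique solution lem} yields a unique strong solution $X^{\bm{\mu}}$ of the decoupled equation \eqref{well-pose sde 1}, so define the Picard map
\[\Phi(\bm{\mu})_t := \mathcal{L}\big(X^{\bm{\mu}}(t-\tau),\, X^{\bm{\mu}}(t)\big).\]
A strong solution to \eqref{MV-Delay-SDE} is precisely a fixed point of $\Phi$. I would first establish the a priori moment bound \eqref{EV(X) bound} in order to certify that $\Phi$ preserves a suitable ball in $C([0,T];\mathcal{P}_V)$, and then use Lemma \ref{Z(t) lemma 2} to show $\Phi$ is a contraction in $\W_{\psi,V}$ on short intervals.

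\textbf{Moment bound.} Apply It\^o's formula to $V(X(t))$; the local-martingale part has quadratic variation bounded by $\zeta^{2}(1+V(X(s)))^{2}$ thanks to $|\sigma(\mbf{x})\nabla V(x)|\leq \zeta(1+V(x))$, so after a standard localization and taking expectations, Assumption \ref{Lyapunov-assump-2} gives
\[\E[V(X(t))] \leq \E[V(X(0))] + \zeta t + \zeta\int_{0}^{t} \big(\bm{\mu}_{s}(\mbf{V}) + \E[\mbf{V}(\mbf{X}(s))]\big)\,ds.\]
For the self-consistent flow $\bm{\mu}_{s} = \mathcal{L}(\mbf{X}(s))$ we have $\bm{\mu}_{s}(\mbf{V}) = \E[V(X(s))] + \E[V(X(s-\tau))]$; splitting $\int_{0}^{t} \E[V(X(s-\tau))]\,ds \leq \tau\,\E[\|\xi\|_{V}] + \int_{0}^{t} \E[V(X(s))]\,ds$ and applying Gronwall delivers \eqref{EV(X) bound}.

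\textbf{Contraction.} For $\bm{\mu}, \bm{\nu}$, take expectations in Lemma \ref{Z(t) lemma 2}; the stochastic integral is a true martingale after localization and thus vanishes. By \eqref{lip eq 1} and $\|\mathbf{Z}^{\bm{\mu},\bm{\nu}}(s)\|=\tfrac{1}{2}(|Z^{\bm{\mu},\bm{\nu}}(s)|+|Z^{\bm{\mu},\bm{\nu}}(s-\tau)|)$,
\[\E[|Z^{\bm{\mu},\bm{\nu}}(t)|] \leq \tfrac{K}{2}\int_{0}^{t} \E\big[|Z^{\bm{\mu},\bm{\nu}}(s)| + |Z^{\bm{\mu},\bm{\nu}}(s-\tau)|\big]\,ds + \theta\int_{0}^{t} \W_{\psi,V}(\bm{\mu}_{s}, \bm{\nu}_{s})\,ds,\]
and because $Z^{\bm{\mu},\bm{\nu}} \equiv 0$ on $[-\tau,0]$, a Gronwall argument that folds the delayed term back onto $[0,t]$ yields
\[\sup_{s\leq t}\E[|Z^{\bm{\mu},\bm{\nu}}(s)|] \leq \theta\, C(T)\int_{0}^{t} \W_{\psi,V}(\bm{\mu}_{s}, \bm{\nu}_{s})\,ds.\]
Using the synchronous coupling $\pi_{t} = \mathcal{L}(\mbf{X}^{\bm{\mu}}(t), \mbf{X}^{\bm{\nu}}(t))$ and the uniform moment bound \eqref{EV(X) bound} to control the weight $1+\mbf{V}(\mbf{x})+\mbf{V}(\mbf{y})$ appearing in $\W_{\psi,V}$, one then upgrades this to
\[\W_{\psi,V}(\Phi(\bm{\mu})_{t}, \Phi(\bm{\nu})_{t}) \leq C_{1}(T)\int_{0}^{t} \W_{\psi,V}(\bm{\mu}_{s}, \bm{\nu}_{s})\,ds,\]
which is a contraction on $[0,T_{0}]$ for $T_{0}$ small; Banach's fixed-point theorem then produces the unique measure flow on the short interval, and patching on $[0, T_{0}], [T_{0}, 2T_{0}], \ldots$ extends it to arbitrary $[0,T]$.

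\textbf{Main obstacle.} Converting the bound on $\E[|Z^{\bm{\mu},\bm{\nu}}|]$ into one on $\W_{\psi,V}(\Phi(\bm{\mu})_{t}, \Phi(\bm{\nu})_{t})$ is the delicate step: the weight $1+\mbf{V}(\mbf{x})+\mbf{V}(\mbf{y})$ is unbounded and $\psi$ is convex but need not be Lipschitz at infinity, so one must perform a Cauchy--Schwarz or H\"older split that separates $\psi(\|\mbf{x}-\mbf{y}\|)$ from the weight, with the weight factor controlled uniformly in $\bm{\mu}$ via \eqref{EV(X) bound}. A secondary technical point is that $\Phi(\bm{\mu})_{t}$ is a joint law on $\R^{d}\times \R^{d}$, so one must simultaneously control $|X^{\bm{\mu}}(t)-X^{\bm{\nu}}(t)|$ and $|X^{\bm{\mu}}(t-\tau)-X^{\bm{\nu}}(t-\tau)|$; fortunately the delayed quantity is just a time-shifted copy of the same difference, which is precisely what allows the Gronwall-with-delay argument above to close.
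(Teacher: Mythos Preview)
Your skeleton matches the paper's: derive the moment bound by It\^o plus Gronwall, then run a fixed-point argument on the measure flow $\bm\mu\mapsto\mathcal L(\mathbf X^{\bm\mu})$ using Lemma~\ref{unique solution lem} for the decoupled problem. The moment-bound paragraph is essentially what the paper does.

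The contraction step, however, diverges from the paper and contains a real gap exactly where you flag the ``main obstacle.'' You propose to first bound $\E[|Z^{\bm\mu,\bm\nu}(t)|]$ and then ``upgrade'' to $\W_{\psi,V}(\Phi(\bm\mu)_t,\Phi(\bm\nu)_t)$ by a Cauchy--Schwarz or H\"older split that peels off the weight $1+\mathbf V(\mathbf x)+\mathbf V(\mathbf y)$. But with the synchronous coupling one needs
\[
\W_{\psi,V}(\Phi(\bm\mu)_t,\Phi(\bm\nu)_t)\le \E\big[\psi(\|\mathbf Z(t)\|)\,(1+\mathbf V(\mathbf X^{\bm\mu}(t))+\mathbf V(\mathbf X^{\bm\nu}(t)))\big],
\]
and the weight is a \emph{random} factor, so $\E[\psi\cdot(1+\mathbf V+\mathbf V)]$ cannot be replaced by $\E[\psi]\cdot(1+\E[\mathbf V]+\E[\mathbf V])$. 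A Cauchy--Schwarz split would require $\E[(1+\mathbf V)^2]<\infty$, i.e.\ $\E[V(X)^2]<\infty$, which is \emph{not} delivered by Assumption~\ref{Lyapunov-assump-2}; that assumption only gives $\E[V(X)]<\infty$. Likewise the Lipschitz condition \eqref{lip eq 1} is one-sided and does not produce the second-moment control of $|Z|$ that a H\"older split would need on the other factor.

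The paper avoids this entirely by never separating the two factors: it applies It\^o's product formula directly to the process
\[
\rho_t:=\psi(\|\mathbf Z^{\bm\mu,\bm\nu}(t)\|)\,\big(1+\mathbf V(\mathbf X^{\bm\mu}(t))+\mathbf V(\mathbf X^{\bm\nu}(t))\big),
\]
combining the estimate for $d\psi(\|\mathbf Z\|)$ coming from Lemma~\ref{Z(t) lemma 2} with a pathwise It\^o estimate for $d\mathbf V(\mathbf X^{\bm\mu})+d\mathbf V(\mathbf X^{\bm\nu})$, and crucially bounding the cross-variation $[\psi(\|\mathbf Z\|),\,1+\mathbf V+\mathbf V]_t$ using the hypothesis $|\sigma(\mathbf x)\nabla V(x)|\le\zeta(1+V(x))$ from Assumption~\ref{Lyapunov-assump-2}. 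That hypothesis is precisely what makes the product close up as $d\rho_t\le C\rho_t\,dt+\ldots$, after which Gronwall on $\E[\rho_t]$ gives the contraction in a weighted metric $\sup_{t\le T}e^{-\lambda t}\W_{\psi,V}(\cdot,\cdot)$ for $\lambda$ large (so no short-time patching is needed). In short, the missing idea in your proposal is to treat $\psi(\|\mathbf Z\|)(1+\mathbf V+\mathbf V)$ as a single It\^o process rather than trying to factor the expectation.
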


    \begin{proof}
    We first notice that \eqref{EV(X) bound} follows from It\^o's formula and Gronwall's inequality.  Indeed by Assumption \ref{Lyapunov-assump-2} and the fact that $V(x)\geq 0$ we have 
    \begin{align*}
        V(X(t)) & =V(X(0)+\int_0^t LV(X(u),\bmu_u)du +M(t) \\
        &\leq V(X(0))+\int_0^t \zeta [1+ \bm{\mu}_u(\mathbf{V})+V(X(u))+V(X(u-\tau))]du + M(t) \\
        &\leq V(X(0))+\int_0^t\zeta(1+\bm{\mu}_u(\mathbf{V}))du + \zeta\int_{-\tau}^0 V(X(u))du  \\ &\quad  +\zeta\int_0^{t-\tau}V(X(u))du + \zeta \int_0^t V(X(u))du +M(t) \\
        & \leq V(X(0))+\int_0^t\zeta(1+\bm{\mu}_u(\mathbf{V}))du + \zeta\int_{-\tau}^0 V(\xi(u))du+2\zeta  \int_0^t V(X(u))du + M(t)
    \end{align*}
   for a martingale term $M(t)$.   Taking expectation and recalling that $\bm\mu_{u}(\mbf V) = \E[V(X(u)) + V(X(u-\tau))]$, we get
    \begin{align*}
        \E[V(X(t))]\leq \E[V(\xi(0))]+\zeta t + 2\zeta\int_{-\tau}^0 \E[V(\xi(u))]du +4\zeta  \int_0^t \E[V(X(u))]du
    \end{align*}
    and by Gronwall's inequality we get
    \begin{align*}
        \E[V(X(t))]&\leq \bigg(\E[V(X(0))]+ \zeta t+\E\bigg[\int_{-\tau}^0 2\zeta V(\xi(u))du]\bigg)e^{4\zeta t} \\
        &\leq \bigg(\E[V(X(0))]+  \zeta T + 2\tau\zeta \E[\|\xi\|_V]\bigg)e^{4\zeta T}<\infty. 
    \end{align*}
    This establishes \eqref{EV(X) bound}.  Now we consider the SDE \eqref{well-pose sde 1} and notice
    \[\sup_{t \in [0,T]}\E[V(X(t))]<\infty.\]
    Next we will adopt a contraction argument similar to that in the proof of \cite[Lemma 2.3]{Wang-23}.
    For fixed $T>0$ we define 
    \[\mathcal{P}_{V,T}=\{\bm{\mu} \in C([0,T];\mathcal{P}_V (\R^d \times \R^d)): \bm{\mu}_0=\mathcal{L}(\mathbf{X}(0))\}.\]
     For $0 \leq t \leq T$, define $(H(\bm{\mu}))_t:=\mathcal{L}(\mathbf{X}^{\bm{\mu}}(t))$.  Notice that $\mathbf{X}^{\bm{\mu}}(t)$ is continuous and so $(H(\bm{\mu}))_t\in C([0,T];\mathcal{P}_V (\R^d \times \R^d))$.  Next we define
     \begin{equation*} 
     \mathcal{P}_{V,T}^N:= \bigg\{\bm \mu \in \mathcal{P}_{V,T}:   \sup_{t \in [0,T]} e^{-N t}\bm{\mu}_t(\mathbf{V})\leq N\big(1+\bm{\mu}_0(\mathbf{V})+\E\big[\|\xi \|_V]\big)\bigg \}. 
    \end{equation*}
    Notice that as $N \to \infty$, $\mathcal{P}_{V,T}^N \to \mathcal{P}_{V,T}$.  Therefore we will show that for any $N$ large enough, $H$ is a contraction in $\mathcal{P}_{V,T}^N$
    under
    \[ \W_{\psi,V,\lambda}(\bm{\mu},\bm{\nu}):=\sup_{0\leq t \leq T} [e^{-\lambda t}\W_{\psi,V}(\bm{\mu}_t,\bm{\nu}_t)], \quad \forall \bm{\mu}, \bm{\nu} \in \mathcal{P}_{V,T}^N\]
    for $\lambda >0.$
    To this end, we will show the following
    \begin{enumerate}
        \item $H:\mathcal{P}_{V,T}^N \to \mathcal{P}_{V,T}^N$
        \item There exists a $\lambda>0$ such that for any $\bm\mu, \bm\nu \in   \mathcal{P}_{V,T}^N$, we have \begin{equation}\label{contraction result}
        \W_{\psi,V,\lambda}(H(\bm{\mu}),H(\bm{\nu}))\leq \frac{1}{2}\W_{\psi,V,\lambda}(\bm{\mu},\bm{\nu}).
        \end{equation}
    \end{enumerate}
 
    Now by It\^o's formula and assumption \ref{Lyapunov-assump-2} we have for $s=0,\tau$ with $t>0$ when $s=\tau$
    \begin{align*}
        \E[V&(X^{\bm{\mu}}(t-s))]\leq \bm{\mu}_0(\mathbf{V}) + \int_0^{t-s} \zeta(1+\bm{\mu}_u(\mathbf{V}) +\E[V(X^{\bm{\mu}}(u))]+\E[V(X^{\bm{\mu}}(u-\tau))])du \\
        & \leq \bm{\mu}_0(\mathbf{V}) + \int_0^t \zeta(1+\bm{\mu}_u(\mathbf{V}) +\E[V(X^{\bm{\mu}}(u))]+\E[V(X^{\bm{\mu}}(u-\tau))])du \\
        & \leq \bm{\mu}_0(\mathbf{V}) + \int_0^t \zeta(1+\bm{\mu}_u(\mathbf{V})e^{-Nu})e^{Nu} du+ \zeta \int_{-\tau}^0 \E[V(X^{\bm{\mu}}(u))]du \\
        & \quad+ \int_0^t 2\zeta \E[V(X^{\bm{\mu}}(u))]du \\
        & \leq \bm{\mu}_0(\mathbf{V}) + \int_0^t \zeta\bigg(1+N\bigg(1+\bm{\mu}_0(\mathbf{V})+\E\big[\|\xi \|_V]\bigg)\bigg)e^{Nu}du + \tau\zeta \E\big[\|\xi \|_V] \\
        &\quad + \int_0^t 2\zeta \E[V(X^{\bm{\mu}}(u))]du.
    \end{align*}
    In the case where $s=\tau$ and $t\in [-\tau,0]$ we have clearly \[\E[V(X^{\bm{\mu}}(t-\tau))]\leq \E\big[\|\xi \|_V] \leq \max\{1,\tau \zeta\} \E\big[\|\xi \|_V].\] By Gronwall's inequality and multiplying by $e^{-Nt}$ we get 
    \begin{align*}
        &e^{-Nt}\E[V(X^{\bm{\mu}}(t-s))] \\
        &\leq \bigg( \bm{\mu}_0(\mathbf{V}) + \bar{\zeta} \E\big[\|\xi \|_V] + \int_0^t  \zeta (1+N(1+\bm{\mu}_0(\mathbf{V})+\E\big[\|\xi \|_V]))e^{Nu}du\bigg)e^{(2\zeta-N) t} \\
        &\leq \bigg(\bm{\mu}_0(\mathbf{V}) + \bar{\zeta} \E\big[\|\xi \|_V] \bigg)e^{(2\zeta-N)t} +\zeta e^{2\zeta t}\bigg(1+N\bigg(1+\bm{\mu}_0(\mathbf{V})+\E\big[\|\xi \|_V]\bigg)\bigg)\int_0^t e^{N(u-t)}du \\
        &\leq c\bigg(\bm{\mu}_0(\mathbf{V})+\bar{\zeta}\E\big[\|\xi \|_V] \bigg)e^{-Nt}+\zeta c \bigg(1+\bigg(1+\bm{\mu}_0(\mathbf{V})+\E\big[\|\xi \|_V]\bigg)\bigg) \\
        &\leq \bar{\zeta}c\bigg(1+\bm{\mu}_0(\mathbf{V})+\E\big[\|\xi \|_V]\bigg)+\bar{\zeta}\bigg(2+2\bm{\mu}_0(\mathbf{V})+2\E\big[\|\xi \|_V]\bigg) \\
        &\leq 3 \bar{\zeta}c\bigg(1+\bm{\mu}_0(\mathbf{V})+\E\big[\|\xi \|_V]\bigg) \leq \frac{1}{2}N_0\bigg(1+\bm{\mu}_0(\mathbf{V})+\E\big[\|\xi \|_V]\bigg),
    \end{align*}
    where
    \[c:=e^{2\zeta T}, \ \bar{\zeta}:=\max \{1,\zeta,\tau\zeta\}, \ N_0:=6\bar{\zeta}c.\]
    Summing for $s=0,\tau$ gives us
    \[e^{-Nt}\E[V(X^{\bm{\mu}}(t)) +V(X^{\bm{\mu}}(t-\tau)) ] \le N\big(1+\bm{\mu}_0(\mathbf{V})+\E[\|\xi\|_V]\big)\]
    for all $N\geq N_0$.  Thus we have  $H:\mathcal{P}_{V,T}^N \to \mathcal{P}_{V,T}^N$.
    
    Next we show there exists a $\lambda >0$ such that
    \[\W_{\psi,V, \lambda}(H(\bm{\mu}),H(\bm{\nu}))\leq \frac{1}{2}\W_{\psi,V \lambda}(\bm{\mu},\bm{\nu}).\]
    By Lemma \ref{Z(t) lemma 2} and by the fact that $X^{\bm{\mu}}(t)=X^{\bm{\nu}}(t)=\xi(t)$ for $t\in[-\tau,0]$ we have for $s=0,\tau$
    \begin{align*}
        |Z^{\bm{\mu},\bm{\nu}}(t-s)|\leq&\int_0^{t-s} K(\|\mathbf{Z}^{\bm{\mu},\bm{\nu}}(u)\|+\theta\W_{\psi,V}(\bm{\mu}_u,\bm{\nu}_u))du \\
       \nonumber &+\int_0^{t-s} I_{\{|Z^{\bm{\mu},\bm{\nu}}(u)|\neq 0\}} \frac{1}{|Z^{\bm{\mu},\bm{\nu}}(u)|}\langle Z^{\bm{\mu},\bm{\nu}}(u), \bm{\bm{\Delta}}(u)dW(u) \rangle.
    \end{align*}
   For the case $t\in[-\tau,0]$ and $s=\tau$ we simply have $|Z^{\bm{\mu},\bm{\nu}}(t-\tau)|=0$. Then adding for $s=0,\tau$ gives,
    \begin{align}\label{Wellposed SDE 1}
\nonumber \frac{1}{2}(|Z^{\bm{\mu},\bm{\nu}}(t)| + & |Z^{\bm{\mu},\bm{\nu}}(t-\tau)|)=\|\mathbf{Z}^{\bm{\mu},\bm{\nu}}(t)\| \\
\nonumber \leq &\  \int_0^t K(\|\mathbf{Z}^{\bm{\mu},\bm{\nu}}(u)\|+\theta\W_{\psi,V}(\bm{\mu}_u,\bm{\nu}_u))du \\
        &+\frac{1}{2}\int_0^t I_{\{|Z^{\bm{\mu},\bm{\nu}}(u)|\neq 0\}} \frac{1+I_{\{u\in[0,t-\tau]\}}}{|Z^{\bm{\mu},\bm{\nu}}(u)|}\langle Z^{\bm{\mu},\bm{\nu}}(u), \bm{\bm{\Delta}}(u) dW(u) \rangle.
    \end{align}
    Now by \cite{Wang-23}[Lemma 2.2]
    \begin{align}\label{well-posed psi}
 \nonumber       \psi(\|\mathbf{Z}^{\bm{\mu},\bm{\nu}}(t)\|)
        \leq&  \int_0^t \psi'(\|\mathbf{Z}^{\bm{\mu},\bm{\nu}}(u)\|)(K\|\mathbf{Z}^{\bm{\mu},\bm{\nu}}(u)\|+\theta \W_{\psi,V}(\bm{\mu}_u,\bm{\nu}_u))d u \\
        &+\frac{1}{8} \int_0^t I_{\{|Z^{\bm{\mu},\bm{\nu}}(u)|\neq 0\}} \psi''(\|\mathbf{Z}^{\bm{\mu},\bm{\nu}}(u)\|)\big(1+I_{\{u\in[0,t-\tau]\}}\big)^2(||\bm{\bm{\Delta}}(u)||_F^2)du  \\
 \nonumber       &+\frac{1}{2}\int_0^t I_{\{|Z^{\bm{\mu},\bm{\nu}}(u)|\neq 0\}} \psi'(\|\mathbf{Z}^{\bm{\mu},\bm{\nu}}(u)\|) \frac{1+I_{\{u\in[0,t-\tau]\}}}{|Z^{\bm{\mu},\bm{\nu}}(u)|}\langle Z^{\bm{\mu},\bm{\nu}}(u), \bm{\bm{\Delta}}(u) dW(u) \rangle \\
  \nonumber      \leq&  \int_0^t C_0(\psi(\|\mathbf{Z}^{\bm{\mu},\bm{\nu}}(u)\|)+\W_{\psi,V}(\bm{\mu}_u,\bm{\nu}_u)) d u \\
   \nonumber       &+\frac{1}{2}\int_0^t I_{\{|Z^{\bm{\mu},\bm{\nu}}(u)|\neq 0\}} \psi'(\|\mathbf{Z}^{\bm{\mu},\bm{\nu}}(u)\|) \frac{1+I_{\{u\in[0,t-\tau]\}}}{|Z^{\bm{\mu},\bm{\nu}}(u)|}\langle Z^{\bm{\mu},\bm{\nu}}(u), \bm{\bm{\Delta}}(u) dW(u) \rangle,
    \end{align} 
    for some constant $C_0$.  The last line follows since $\psi$ is a concave function.  
    Next we notice for $s=0,\tau$ with $t>0$ when $s=t$ we have by It\^o's formula 
    \begin{align*}
        V&(X^{\bm{\mu}}(t-s))  = \|\xi\|_v+\int_0^{t-s} LV(\mathbf{X}^{\bm{\mu}}(u),\bm{\mu}_u)du +\int_0^{t-s}\langle\sigma(X^{\bm{\mu}}(u))\nabla V(X^{\bm{\mu}}(u)),dW(u)\rangle \\
        &\leq \|\xi\|_v+\int_0^{t-s} \zeta(1+\bm{\mu}_u(\mathbf{V})+\mathbf{V}(\mathbf{X}^{\bm{\mu}}(u)))du +\int_0^{t-s}\langle\sigma(X^{\bm{\mu}}(u))\nabla V(X^{\bm{\mu}}(u)),dW(u)\rangle \\
        &\leq \|\xi\|_v+\int_0^{t} \zeta(1+\bm{\mu}_u(\mathbf{V})+\mathbf{V}(\mathbf{X}^{\bm{\mu}}(u)))du 
         +\int_0^{t}I_{u\in [0,t-s]}\langle\sigma(X^{\bm{\mu}}(u))\nabla V(X^{\bm{\mu}}(u)),dW(u)\rangle.
    \end{align*}
    Again, for the case $t\in[-\tau,0]$ and $s=\tau$ the last inequality still holds as $V(X^{\bm{\mu}}(t-s))\leq \|\xi\|_V$, and the remainder of the terms are positive. We have a similar estimate for $V(X^{\bm{\nu}}(t-s))$.  Thus
    \begin{align}\label{well posed V}
\nonumber        \mathbf{V}(\mathbf{X}&^{\bm{\mu}}(t))+\mathbf{V}(\mathbf{X}^{\bm{\nu}}(t)) \\
     \nonumber   \leq & 2\|\xi\|_v +2\int_0^{t} \zeta(2+\bm{\mu}_u(\mathbf{V})+\bm{\nu}_u(\mathbf{V})+\mathbf{V}(\mathbf{X}^{\bm{\mu}}(u))+\mathbf{V}(\mathbf{X}^{\bm{\nu}}(u)))du \\
\nonumber     &+ \int_0^{t}(1+I_{u\in [0,t-s]})\langle\sigma(X^{\bm{\mu}}(u))\nabla V(X^{\bm{\mu}}(u)+\sigma(X^{\bm{\nu}}(u))\nabla V(X^{\bm{\nu}}(u)),dW(u)\rangle \\
 \nonumber       \leq & 2\|\xi\|_v + C_1(N)+\int_0^{t} 2\zeta(1+\mathbf{V}(\mathbf{X}^{\bm{\mu}}(u)+\mbf V(\mathbf{X}^{\bm{\nu}}(u))))du \\
             &+ \int_0^{t}(1+I_{u\in [0,t-s]})\langle\sigma(X^{\bm{\mu}}(u))\nabla V(X^{\bm{\mu}}(u)+\sigma(X^{\bm{\nu}}(u))\nabla V(X^{\bm{\nu}}(u)),dW(u)\rangle
    \end{align}
    for some constant $C_1(N)$ depending on $N$.  The last line follows by recalling that $\bm{\mu}_t(\mathbf{V}) \leq N(1+\bm{\mu}_0(\mathbf{V})+\E[\|\xi\|_V])e^{NT}$.  Next we calculate the cross variation
    \begin{align}\label{Cross-var-bound}
 \nonumber   [\psi&(\|\mathbf{Z}^{\bm{\mu},\bm{\nu}}\|),1+\mathbf{V}(\mathbf{X}^{\bm{\mu}})+V(\mathbf{X}^{\bm{\nu}})]_t \\
 \nonumber &\leq 2\psi'(\|\mathbf{Z}^{\bm{\mu},\bm{\nu}}(t)\|)\langle \bm\Delta(t)e(t), \sigma(X^{\bm{\mu}}(t))\nabla V(X^{\bm{\mu}}(t)+\sigma(X^{\bm{\nu}}(t))\nabla V(X^{\bm{\nu}}(t)) \rangle \\
\nonumber    & \leq 2\psi'(\|\mathbf{Z}^{\bm{\mu},\bm{\nu}}(t)\|)\|\bm\Delta(t)\| |\sigma(X^{\bm{\mu}}(t))\nabla V(X^{\bm{\mu}}(t)+\sigma(X^{\bm{\nu}}(t))\nabla V(X^{\bm{\nu}}(t))| \\
    & \leq C_2\psi(\|\mathbf{Z}^{\bm{\mu},\bm{\nu}}(t)\|)(1+\mathbf{V}(\mathbf{X}^{\bm{\mu}}(t))+V(\mathbf{X}^{\bm{\nu}}(t))        
    \end{align}
    For some constant $C_2$.  The last line follows by assumptions \ref{Lyapunov-assump-2}, \ref{lip-assump}, and the fact that $\psi$ is a concave function.
    Now we define $$\rho(\|\mathbf{Z}^{\bm{\mu},\bm{\nu}}(t)\|):=\psi(\|\mathbf{Z}^{\bm{\mu},\bm{\nu}}(t)\|)(1+\mathbf{V}(\mathbf{X}^{\bm{\mu}}(t))+V(\mathbf{X}^{\bm{\nu}}(t))).$$ 
    Then combining \eqref{well-posed psi}, \eqref{well posed V}, and \eqref{Cross-var-bound} we get
    \begin{align*}
        d\rho(\|\mathbf{Z}^{\bm{\mu},\bm{\nu}}(t)\|) &=d\psi(\|\mathbf{Z}^{\bm{\mu},\bm{\nu}}(t)\|) (1+\mathbf{V}(\mathbf{X}^{\bm{\mu}}(t)+V(\mathbf{X}^{\bm{\nu}}(t)))+\psi(\|\mathbf{Z}^{\bm{\mu},\bm{\nu}}(t)\|)(d\mathbf{V}(\mathbf{X}^{\bm{\mu}}(t)+dV(\mathbf{X}^{\bm{\nu}}(t)))\\
        & \quad+d[\psi(\|\mathbf{Z}^{\bm{\mu},\bm{\nu}}\|),1+\mathbf{V}(\mathbf{X}^{\bm{\mu}})+V(\mathbf{X}^{\bm{\nu}})]_t\\
       & \leq   C_0(\psi(\|\mathbf{Z}^{\bm{\mu},\bm{\nu}}(t)\|) + \W_{\psi,V}(\bm{\mu}_t,\bm{\nu}_t))(1+\mathbf{V}(\mathbf{X}^{\bm{\mu}}(t))+V(\mathbf{X}^{\bm{\nu}}(t)))dt \\
        &\quad + \psi(\|\mathbf{Z}^{\bm{\mu},\bm{\nu}}(t)\|)(2 \zeta+C_2)(1+\mathbf{V}(\mathbf{X}^{\bm{\mu}}(t)+V(\mathbf{X}^{\bm{\nu}}(t))) +dM(t)\\
      &  \leq   C_3(N)\big(\rho(\|\mathbf{Z}^{\bm{\mu},\bm{\nu}}(t)\|)+ \W_{\psi,V}(\bm{\mu}_t,\bm{\nu}_t)(1+\mathbf{V}(\mathbf{X}^{\bm{\mu}}(t)+V(\mathbf{X}^{\bm{\nu}}(t))\big)+dM(t)
    \end{align*}
    for a constant $C_3(N)$ depending on $N$.  Now we notice that $\E[\mathbf V(\mathbf X^{\bm{\mu}}(t)]+\E[\mathbf{V}(\mathbf{X}^{\bm{\nu}}(t)]\leq N(2+\bm{\mu}_0(\mathbf{V})+\bm{\nu}_0(\mathbf{V})+2\E[\|\xi\|_V])e^{NT}:=D(N)$ and so by taking expectation we get
    \begin{align*}
        \E[\rho(\mathbf{Z}^{\bm{\mu},\bm{\nu}}(t))]&\leq C_3(N)\int_0^t \E[\rho(\mathbf{Z}^{\bm{\mu},\bm{\nu}}(u))]+ \W_{\psi,V}(\bm{\mu}_u,\bm{\nu}_u)(1+\E[\mathbf{V}(\mathbf{X}^{\bm{\mu}}(u)+\E[V(\mathbf{X}^{\bm{\nu}}(u)])du \\
        &\leq C_3(N)\int_0^t \E[\rho(\mathbf{Z}^{\bm{\mu},\bm{\nu}}(u))]du+C_3(N)(1+D(N))\int_0^t \W_{\psi,V}(\bm{\mu}_u,\bm{\nu}_u) du.
    \end{align*}
    Then by Gronwall's inequality and defining $C_4(N):=C_3(N)(1+D(N))$ we get
    \[\E[\rho(\mathbf{Z}^{\bm{\mu},\bm{\nu}}(u))]\leq C_4(N)\int_0^t \W_{\psi,V}(\bm{\mu}_u,\bm{\nu}_u) du e^{C_3(N)T}.\]
    Then multiplying by $e^{-\lambda t}$ gives
    \begin{align*}
    e^{-\lambda t}&\W_{\psi,V}(H(\bm \mu),H(\bm\nu))\leq e^{-\lambda t}\E[\rho(\mathbf{Z}^{\bm{\mu},\bm{\nu}}(t))] \\
    &\leq C_4(N)e^{C_3(N)T}\int_0^t e^{-\lambda(t-u)}e^{-\lambda u} \W_{\psi,V}(\bm{\mu}_u,\bm{\nu}_u) du  
    \leq \frac{1}{\lambda}C_4(N)e^{C_3(N)T}\W_{\psi,V,\lambda}(\bm\mu,\bm\nu).            
    \end{align*}
    Taking supremum over $t\in[0,T]$ and choosing $\lambda=2C_4(N)e^{C_3(N)T} $ gives the desired contraction, \eqref{contraction result}.  Therefore $H:\mathcal{P}_{V,T}^N \to \mathcal{P}_{V,T}^N$ is a contraction under $\W_{\psi,V,\lambda}$ for large enough $N$ and $\lambda$.  Thus $H$ has a unique fixed point $\bm{\mu}^*$, such that $X(t)^{\bm{\mu}^*}$  is the unique solution to \eqref{well-pose sde 1}, establishing the existence of a unique strong solution.
\end{proof}


\begin{example}
  We consider the following stochastic opinion dynamic model:
  \begin{equation}\label{SODM-ex-1}
  \begin{aligned}
      dX(t)=&\bigg[\int_\R\Phi_{-1}(|X(t-\tau)-y_{-1}|)(X(t-\tau)-y_{-1})\mu_{-1}(dy_{-1}) + \varphi_{-1}(X(t-\tau))\\
      &+\int_\R \Phi_0(|X(t)-y_0|)(X(t)-y_0)\mu_0(dy_0)      
      +\varphi_{0}(X(t))\bigg]dt+\sigma dW(t),
      \end{aligned}
  \end{equation}
with $X(t)=\xi(t)$ for $t\in[-\tau,0]$. Which is motivated by the Heterophilious dynamic model in \cite{Motsch-Sebastien-Tadmor-14}, and which is further studied numerically in \cite[section 4.2]{Lu-Maggioni-Tang-21}.  The above model is modified to include some \textit{intrinsic memory}.

Here $X(t)$ is an $\R$ valued random variable representing an individuals opinion.  Further $\mu_0$ represents the law of $X(t)$ and $\mu_{-1}$ represents the law of $X(t-\tau)$.  The functions $\Phi_0$ and $\Phi_{-1}$ are interaction kernels representing the influence of one person's opinion on another person's opinion.  In the case of Heterophilious dynamic models proposed in \cite{Motsch-Sebastien-Tadmor-14}, where one is more influenced by people with more differing opinions, but not extremely so, these functions are Lipschitz and compactly supported.  Finally, $\varphi_0$ and $\varphi_{-1}$ represent a drift in opinion, typically a reversion to $0$.

We assume $\Phi_{-1}$ and $\Phi_{0}$ are $a$-Lipschitz with compact support (and therefore bounded), and $\varphi_{-1}$ and $\varphi_0$ are $b$-Lipschitz.  Then we have for $V(x)=x^2$, $\|\Phi\|=\max\{\sup \Phi_{-1},\sup \Phi_0\}$,
\begin{align*}
    LV(\mbf{x},\bm{\mu})\leq&2x_0\bigg[\|\Phi\|\int_\R |x_{-1}|+|y_{-1}|\mu_{-1}(dy_{-1})+\|\Phi\|\int_\R|x_0|+|y_0|\mu_0(dy_0) \\
    &+|\varphi_0(0)|+|\varphi_{-1}(0)|+b(|x_0|+|x_{-1}|)\bigg]+\sigma^2\\
    \leq&6x_0^2+\frac{\|\Phi\|^2}{2}\bigg[x_{-1}^2+x_0^2+\int_\R\int_\R|y_{-1}|+|y_0|\mu(dy_{0},dy_{-1})\bigg]\\
    &+|\varphi_0(0)|^2+2b^2x_0^2+|\varphi_{-1}(0)|^2+2b^2x_{-1}^2+\sigma^2\\
    \leq& \zeta(1+\mbf{V}(\mbf{x})+\bm{\mu}(\mbf{V})),
\end{align*}
 for $\zeta=\max\{6+\frac{\|\Phi\|}{2}+2b^2,|\varphi_0(0)|^2+|\varphi_{-1}(0)|^2+\sigma^2\}$.  Therefore assumption \ref{lip-assump} holds.
Further notice
\begin{align*}
    \int_\R \int_\R & \Phi_0(|x_0-y_0|)(x_0-y_0) - \Phi_0(|\tilde{x}_0-\tilde{y}_0|)(\tilde{x}_0-\tilde{y}_0)\mu_0(dy_0)\tilde{\mu}_0(d\tilde{y}_0) \\
    \leq& \int_\R \int_R |\Phi_0(|x_0-y_0|)(x_0-y_0)-\Phi_0(|x_0-y_0|)(\tilde{x}_0-\tilde{y}_0)| \\
    &+ |\Phi_0(|x_0-y_0|)(\tilde{x}_0-\tilde{y}_0)-\Phi_0(|\tilde{x}_0-\tilde{y}_0|)(\tilde{x}_0-\tilde{y}_0)|\mu_0(dy_0)\tilde{\mu}_0(d\tilde{y}_0) \\
    \leq &\int_\R \int_\R \|\Phi\||(x_0-y_0)-(\tilde{x}_0-\tilde{y}_0)|+R||x_0-y_0|-|\tilde{x}_0-\tilde{y}_0||\mu_0(dy_0)\tilde{\mu}_0(d\tilde{y}_0 \\
    \leq& \int_\R \int_\R \|\Phi\|||x_0-\tilde{x}_0|+|y_0-\tilde{y}_0||+aR||x_0-\tilde{x}_0|-|y_0-\tilde{y}_0||\mu_0(dy_0)\tilde{\mu}_0(d\tilde{y}_0\\
    \leq & A|x_0-\tilde{y}_0|+A\int_\R\int_\R |y_0-\tilde{x}_0|\pi_0(dy_0,d\tilde{y}_0)
\end{align*}
for a coupling $\pi_0$ and constant $A=2\max\{\|\Phi\|,aR\}$ such that $\Phi_0$ and $\Phi_{-1}$ have support contained in an interval of length $R$. A similar result holds for the delayed kernel term.  Combining this with the fact that $\varphi_0$ and $\varphi_1$ are $b$-Lipschitz we have
\begin{align*}
    ( x_0-\tilde{x}_0)&(b(\mbf{x},\bm{\mu})-b(\mbf{\tilde{x}},\bm{\tilde{\mu}})) \leq (x_0-\tilde{x}_0)(A(|x_0-\tilde{x}_0|+|x_{-1}-\tilde{x}_{-1}|)+b(|x_0-\tilde{x}_0|+|x_{-1}-\tilde{x}_{-1}|)\\
    &+A\bigg(\iint |y_{0}-\tilde{y}_{0}|\pi_0(dy_0,d\tilde{y}_0)+\iint |y_{-1}-\tilde{y}_{-1}|\pi_{-1}(dy_{-1},d\tilde{y}_{-1})\bigg) \\
    \leq& |x_0-\tilde{x}_0|(K\|\mbf{x}-\mbf{\tilde{x}}\|+\theta \W_1(\bm{\mu},\bm{\tilde{\mu}}))\leq |x_0-\tilde{x}_0|(K\|\mbf{x}-\mbf{\tilde{x}}\|+\theta \W_{\|\cdot\|,V}(\bm{\mu},\bm{\tilde{\mu}}),
\end{align*}
for $K=2\max\{A,b\}$ and $\theta=A$.  The last inequality follows since $\W_1(\bm{\mu},\bm{\nu})\leq \W_{\|\cdot \|,V}(\bm{\mu},\bm{\nu}))$ for any $V$.  Therefore assumption \ref{lip-assump} holds, and by theorem \ref{unique solution}, \eqref{SODM-ex-1} admits a unique solution.
\end{example}

The above results can be expanded to the following distribution dependent SDE with $n$ deterministic delay terms

\begin{equation}\label{MVSDE_Det_Delay}
    dX(t)=b(\mbf X(t), \bm{\mu}_t)dt+\sigma (\mbf X(t))dW(t)
\end{equation}
where $\mbf X(t)=(X(t-\tau_n(t)),...X(t-\tau_1(t)),X(t))$ for $\tau_i(t): \R^+ \to (0,\tau]$ and $\bm \mu_t$ is the joint distribution of $\mbf X(t)$.  We still assume the initial condition $X(t)=\xi(t)$ for $t\in[-\tau,0]$.  We appropriately scale the $L^1$ norm $\|\cdot \|$ on $\mbf x=(x_{-n},...,x_{-1},x_0)$ by $\|\mbf x\|=\frac{1}{n}\sum_{i=0}^n |x_{-i}|$.  This scaling is to simplify the calculation of the radial process defined similarly to equation \eqref{Zt Process}. Further redefine $\mbf V(\mbf x):=\sum_{i=0}^n V(x_{-i})$ and $\bm{\mu}(\mbf V):=\int_{\R^{nd}} \sum_{i=0}^n V(x_{-i})\bm{\mu}(dx_0,...,dx_{-n})$. Then theorem \ref{unique solution} can be generalized by the following corollary.

\begin{corollary}
    Suppose Assumption \ref{Lyapunov-assump-2} and \ref{lip-assump} hold, then equation \eqref{MVSDE_Det_Delay} admits a unique strong solution with
           \[ \E[V(X(t))]\leq \bigg(\E[V(X(0))]+  \zeta T + 2(n-1)\zeta \E[\|\xi\|_V]\bigg)e^{2n \zeta T}<\infty.\]
\end{corollary}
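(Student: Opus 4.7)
The argument mirrors that of Theorem \ref{unique solution} with bookkeeping adjusted for the $n$ deterministic delays; no essentially new idea is needed, but the indexing and the time-varying delays require care. I would proceed in three steps: establish the a priori moment bound, re-derive the analog of Lemma \ref{Z(t) lemma 2} in the multi-delay setting, then run the contraction on a suitable $\mathcal{P}_{V,T}^N$ exactly as in the theorem.

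For the moment estimate, applying It\^o's formula to $V(X(t))$ together with Assumption \ref{Lyapunov-assump-2} and taking expectation yields
\[
\E[V(X(t))] \le \E[V(\xi(0))] + \zeta t + \zeta \int_0^t \bm\mu_u(\mbf V)\, du + \zeta\int_0^t \E[\mbf V(\mbf X(u))]\, du.
\]
Since $\bm\mu_u(\mbf V) = \E[\mbf V(\mbf X(u))] = \sum_{i=0}^n \E[V(X(u-\tau_i(u)))]$, I would split each delayed integral for $i\geq 1$ via
\[
\int_0^t \E[V(X(u-\tau_i(u)))]\, du \le \int_{-\tau}^0 \E[V(\xi(s))]\, ds + \int_0^t \E[V(X(s))]\, ds,
\]
using $\tau_i(u)\in(0,\tau]$. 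This reduces the inequality to linear Gronwall form with coefficient proportional to $n\zeta$ on $\E[V(X(\cdot))]$ and a source term controlled by $\zeta T$ and $\zeta\E[\|\xi\|_V]$; applying Gronwall delivers the stated exponential bound with rate $2n\zeta T$.

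For existence and uniqueness, Lemma \ref{Z(t) lemma 2} carries over verbatim: its proof depends only on the Generalized Lipschitz condition and the regularization argument via $g_n$, neither of which is sensitive to the number of delay coordinates. I would then set $H(\bm\mu)_t := \LL(\mbf X^{\bm\mu}(t))$ on
\[
\mathcal{P}_{V,T}^N := \Big\{\bm\mu \in C([0,T];\mathcal{P}_V(\R^{(n+1)d})): \bm\mu_0=\LL(\mbf X(0)),\ \sup_{t\in[0,T]} e^{-Nt}\bm\mu_t(\mbf V) \le N(1+\bm\mu_0(\mbf V)+\E[\|\xi\|_V])\Big\}.
\]
Verifying that $H$ stabilizes this set is the same Gronwall computation scaled by $n$. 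Summing the single-coordinate radial estimate from Lemma \ref{Z(t) lemma 2} over $i=0,\dots,n$ and dividing by $n$ (as in the rescaled $\|\cdot\|$) produces a clean bound on $\|\mbf Z^{\bm\mu,\bm\nu}(t)\|$ that feeds into the evolution of the auxiliary process $\rho(\|\mbf Z^{\bm\mu,\bm\nu}(t)\|) := \psi(\|\mbf Z^{\bm\mu,\bm\nu}(t)\|)(1+\mbf V(\mbf X^{\bm\mu}(t))+\mbf V(\mbf X^{\bm\nu}(t)))$ exactly as in the single-delay case.

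The main obstacle I anticipate is precisely the time-varying deterministic delays: the substitution $v=u-\tau_i(u)$ is unavailable when $\tau_i$ is merely measurable, so the delayed integrals must be estimated by the crude bound above, trading sharpness for robustness and paying the constant $n$ in the exponent. Once that bookkeeping is in place, the cross-variation estimate, the choice $\lambda = 2 C_4(N) e^{C_3(N) T}$, and Banach's fixed point theorem transfer with only notational changes, yielding the unique fixed point $\bm\mu^*$ and hence the unique strong solution to \eqref{MVSDE_Det_Delay}.
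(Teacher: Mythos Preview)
Your plan is exactly the paper's: the paper omits the proof entirely, stating it is ``identical to that of Theorem~\ref{unique solution}, where we simply allow $s=0,\tau_1(t),\dots,\tau_n(t)$ where appropriate and note that $\tau_i(t)\in(0,\tau]$.'' So the three-step scheme (a priori bound via Gronwall, radial process as in Lemma~\ref{Z(t) lemma 2}, then the $\mathcal{P}_{V,T}^N$ contraction with the weighted metric $\W_{\psi,V,\lambda}$) matches verbatim.

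One technical point to tighten: the displayed estimate
\[
\int_0^t \E[V(X(u-\tau_i(u)))]\,du \;\le\; \int_{-\tau}^0 \E[V(\xi(s))]\,ds + \int_0^t \E[V(X(s))]\,ds
\]
is precisely what the substitution $v=u-\tau_i(u)$ would give, and you correctly note that substitution is unavailable for merely measurable $\tau_i$; without it the inequality can fail (take $\tau_i(u)=u-s_0$ on an interval so that $u-\tau_i(u)\equiv s_0$ sits at a peak of $s\mapsto\E[V(X(s))]$). The robust replacement is
\[
\int_0^t \E[V(X(u-\tau_i(u)))]\,du \;\le\; \tau\,\E[\|\xi\|_V] + \int_0^t \sup_{0\le s\le u}\E[V(X(s))]\,du,
\]
after which Gronwall applied to $t\mapsto\sup_{0\le s\le t}\E[V(X(s))]$ yields the same exponential bound (with constants of the stated form). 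The rest of your argument goes through unchanged.
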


The proof is omitted because it is identical to that of theorem \ref{unique solution} above, where we simply allow $s=0,\tau_1(t)...,\tau_i(t)$ where appropriate, and note that $\tau_i(t) \in (0,\tau]$.

\bibliographystyle{unsrtnat}
\bibliography{refs}

\def\cprime{$'$} \def\polhk#1{\setbox0=\hbox{#1}{\ooalign{\hidewidth \lower1.5ex\hbox{`}\hidewidth\crcr\unhbox0}}}
\begin{thebibliography}{22}
\providecommand{\natexlab}[1]{#1}
\providecommand{\url}[1]{\texttt{#1}}
\expandafter\ifx\csname urlstyle\endcsname\relax
  \providecommand{\doi}[1]{doi: #1}\else
  \providecommand{\doi}{doi: \begingroup \urlstyle{rm}\Url}\fi

\bibitem[McKean(1966)]{McKean-66}
H.~P. McKean, Jr.
\newblock A class of {M}arkov processes associated with nonlinear parabolic equations.
\newblock \emph{Proc. Nat. Acad. Sci. U.S.A.}, 56:\penalty0 1907--1911, 1966.
\newblock ISSN 0027-8424.

\bibitem[McKean(1967)]{McKean-67}
H.~P. McKean, Jr.
\newblock Propagation of chaos for a class of non-linear parabolic equations.
\newblock In \emph{Stochastic {D}ifferential {E}quations ({L}ecture {S}eries in {D}ifferential {E}quations, {S}ession 7, {C}atholic {U}niv., 1967)}, pages 41--57. Air Force Office Sci. Res., Arlington, Va., 1967.
\newblock URL \url{https://mathscinet.ams.org/mathscinet-getitem?mr=0233437}.

\bibitem[Funaki(1984)]{Funaki-84}
Tadahisa Funaki.
\newblock A certain class of diffusion processes associated with nonlinear parabolic equations.
\newblock \emph{Z. Wahrsch. Verw. Gebiete}, 67\penalty0 (3):\penalty0 331--348, 1984.
\newblock ISSN 0044-3719.
\newblock \doi{10.1007/BF00535008}.
\newblock URL \url{https://mathscinet.ams.org/mathscinet-getitem?mr=762085}.

\bibitem[Sznitman(1991)]{Sznitman-91}
Alain-Sol Sznitman.
\newblock Topics in propagation of chaos.
\newblock In \emph{{\'E}cole d'{\'E}t{\'e} de {P}robabilit{\'e}s de {S}aint-{F}lour {XIX}---1989}, volume 1464 of \emph{Lecture Notes in Math.}, pages 165--251. Springer, Berlin, 1991.
\newblock URL \url{https://doi.org/10.1007/BFb0085169}.

\bibitem[Huang et~al.(2006)Huang, Malham{\'e}, and Caines]{HuangMC-06}
Minyi Huang, Roland~P. Malham{\'e}, and Peter~E. Caines.
\newblock Large population stochastic dynamic games: closed-loop {M}c{K}ean-{V}lasov systems and the {N}ash certainty equivalence principle.
\newblock \emph{Commun. Inf. Syst.}, 6\penalty0 (3):\penalty0 221--251, 2006.
\newblock ISSN 1526-7555.
\newblock URL \url{http://projecteuclid.org/euclid.cis/1183728987}.

\bibitem[Lasry and Lions(2007)]{LasryL-07}
Jean-Michel Lasry and Pierre-Louis Lions.
\newblock Mean field games.
\newblock \emph{Jpn. J. Math.}, 2\penalty0 (1):\penalty0 229--260, 2007.
\newblock ISSN 0289-2316.
\newblock \doi{10.1007/s11537-007-0657-8}.
\newblock URL \url{http://dx.doi.org/10.1007/s11537-007-0657-8}.

\bibitem[Ding and Qiao(2021)]{DingQ-21}
Xiaojie Ding and Huijie Qiao.
\newblock Euler-{M}aruyama approximations for stochastic {M}c{K}ean-{V}lasov equations with non-{L}ipschitz coefficients.
\newblock \emph{J. Theoret. Probab.}, 34\penalty0 (3):\penalty0 1408--1425, 2021.
\newblock ISSN 0894-9840.
\newblock \doi{10.1007/s10959-020-01041-w}.
\newblock URL \url{https://doi.org/10.1007/s10959-020-01041-w}.

\bibitem[Chaudru~de Raynal(2020)]{Chau-20}
P.~E. Chaudru~de Raynal.
\newblock Strong well posedness of {M}c{K}ean-{V}lasov stochastic differential equations with {H}\"{o}lder drift.
\newblock \emph{Stochastic Process. Appl.}, 130\penalty0 (1):\penalty0 79--107, 2020.
\newblock ISSN 0304-4149.
\newblock \doi{10.1016/j.spa.2019.01.006}.
\newblock URL \url{https://doi.org/10.1016/j.spa.2019.01.006}.

\bibitem[Wang(2018)]{Wang-18}
Feng-Yu Wang.
\newblock Distribution dependent {SDE}s for {L}andau type equations.
\newblock \emph{Stochastic Process. Appl.}, 128\penalty0 (2):\penalty0 595--621, 2018.
\newblock ISSN 0304-4149.
\newblock \doi{10.1016/j.spa.2017.05.006}.
\newblock URL \url{https://mathscinet.ams.org/mathscinet-getitem?mr=3739509}.

\bibitem[Carmona and Delarue(2018{\natexlab{a}})]{CarmoD-18I}
Ren\'{e} Carmona and Fran\c{c}ois Delarue.
\newblock \emph{Probabilistic theory of mean field games with applications {I}: Mean field FBSDEs, control, and games}, volume~83 of \emph{Probability Theory and Stochastic Modelling}.
\newblock Springer, Cham, 2018{\natexlab{a}}.
\newblock ISBN 978-3-319-56437-1; 978-3-319-58920-6.
\newblock URL \url{https://mathscinet.ams.org/mathscinet-getitem?mr=3752669}.

\bibitem[Carmona and Delarue(2018{\natexlab{b}})]{CarmoD-18II}
Ren\'{e} Carmona and Fran\c{c}ois Delarue.
\newblock \emph{Probabilistic theory of mean field games with applications {II}: Mean field games with common noise and master equations}, volume~84 of \emph{Probability Theory and Stochastic Modelling}.
\newblock Springer, Cham, 2018{\natexlab{b}}.
\newblock ISBN 978-3-319-56435-7; 978-3-319-56436-4.
\newblock URL \url{https://mathscinet.ams.org/mathscinet-getitem?mr=3753660}.

\bibitem[Hairer et~al.(2011)Hairer, Mattingly, and Scheutzow]{HairMS-11}
M.~Hairer, J.~C. Mattingly, and M.~Scheutzow.
\newblock Asymptotic coupling and a general form of {H}arris' theorem with applications to stochastic delay equations.
\newblock \emph{Probab. Theory Related Fields}, 149\penalty0 (1-2):\penalty0 223--259, 2011.
\newblock ISSN 0178-8051.
\newblock \doi{10.1007/s00440-009-0250-6}.
\newblock URL \url{http://dx.doi.org/10.1007/s00440-009-0250-6}.

\bibitem[Wu et~al.(2023)Wu, Xi, and Zhu]{WuXZ-23}
Fuke Wu, Fubao Xi, and Chao Zhu.
\newblock On a class of mckean-vlasov stochastic functional differential equations with applications.
\newblock \emph{Journal of Differential Equations}, 371:\penalty0 31--49, 2023.
\newblock ISSN 0022-0396.
\newblock \doi{https://doi.org/10.1016/j.jde.2023.06.022}.
\newblock URL \url{https://www.sciencedirect.com/science/article/pii/S0022039623004291}.

\bibitem[Wu et~al.(2022)Wu, Hu, Gao, and Yuan]{WuHJG-22}
Hao Wu, Junhao Hu, Shuaibin Gao, and Chenggui Yuan.
\newblock Stabilization of stochastic mckean--vlasov equations with feedback control based on discrete-time state observation.
\newblock \emph{SIAM Journal on Control and Optimization}, 60\penalty0 (5):\penalty0 2884--2901, 2022.
\newblock \doi{10.1137/21M1454997}.
\newblock URL \url{https://doi.org/10.1137/21M1454997}.

\bibitem[Helbing(2010)]{Helbing-Dirk-10}
Dirk Helbing.
\newblock \emph{Quantitative Sociodynamics. Stochastic Methods and Models of Social Interaction Processes}.
\newblock Springer Berlin Heidelberg, 01 2010.
\newblock \doi{10.1007/978-3-642-11546-2}.

\bibitem[Friedkin and Johnsen(1990)]{Friedkin-Jojnsen-90}
Noah Friedkin and Eugene Johnsen.
\newblock Social influence and opinions.
\newblock \emph{Journal of Mathematical Sociology - J MATH SOCIOL}, 15:\penalty0 193--206, 01 1990.
\newblock \doi{10.1080/0022250X.1990.9990069}.

\bibitem[Zhou et~al.(2021)Zhou, Yuan, Liang, Ding, and Du]{Zhou-Zhao-Yuan-21}
Zhao Zhou, Fanli Yuan, Haili Liang, Baogang Ding, and Wenli Du.
\newblock The stability analysis of stochastic opinion dynamics systems with multiplicative noise and time delays.
\newblock \emph{IET Control Theory \& Applications}, 15\penalty0 (13):\penalty0 1769--1777, 2021.
\newblock \doi{https://doi.org/10.1049/cth2.12158}.
\newblock URL \url{https://ietresearch.onlinelibrary.wiley.com/doi/abs/10.1049/cth2.12158}.

\bibitem[Wang(2023)]{Wang-23}
Feng-Yu Wang.
\newblock {Exponential ergodicity for non-dissipative McKean-Vlasov SDEs}.
\newblock \emph{Bernoulli}, 29\penalty0 (2):\penalty0 1035 -- 1062, 2023.
\newblock \doi{10.3150/22-BEJ1489}.
\newblock URL \url{https://doi.org/10.3150/22-BEJ1489}.

\bibitem[Meyn and Tweedie(1993)]{MeynT-93III}
Sean~P. Meyn and R.~L. Tweedie.
\newblock Stability of {M}arkovian processes. {III}. {F}oster-{L}yapunov criteria for continuous-time processes.
\newblock \emph{Adv. in Appl. Probab.}, 25\penalty0 (3):\penalty0 518--548, 1993.
\newblock ISSN 0001-8678.
\newblock \doi{10.2307/1427522}.
\newblock URL \url{http://dx.doi.org/10.2307/1427522}.

\bibitem[Karatzas and Shreve(1991)]{Karatzas-S}
Ioannis Karatzas and Steven~E. Shreve.
\newblock \emph{Brownian motion and stochastic calculus}, volume 113 of \emph{Graduate Texts in Mathematics}.
\newblock Springer-Verlag, New York, second edition, 1991.
\newblock ISBN 0-387-97655-8.

\bibitem[Motsch and Tadmor(2014)]{Motsch-Sebastien-Tadmor-14}
Sebastien Motsch and Eitan Tadmor.
\newblock Heterophilious dynamics enhances consensus.
\newblock \emph{SIAM Review}, 56\penalty0 (4):\penalty0 577--621, 2014.
\newblock \doi{10.1137/120901866}.
\newblock URL \url{https://doi.org/10.1137/120901866}.

\bibitem[Lu et~al.(2021)Lu, Maggioni, and Tang]{Lu-Maggioni-Tang-21}
Fei Lu, Mauro Maggioni, and Sui Tang.
\newblock Learning interaction kernels in heterogeneous systems of agents from multiple trajectories.
\newblock \emph{Journal of Machine Learning Research}, 22\penalty0 (32):\penalty0 1--67, 2021.
\newblock URL \url{http://jmlr.org/papers/v22/19-861.html}.

\end{thebibliography}

\end{document}